\newtheorem{theorem}{Theorem}
\newtheorem{proposition}{Proposition}
\newtheorem{definition}{Definition}
\newtheorem{remark}{Remark}
\newtheorem{lemma}{Lemma}
\begin{document}
\renewcommand{\thefootnote}{}
\footnotetext{Research partially supported by Ministerio de Educaci\'on Grants No: MTM2013-43970-P,  No: PHB2010-0109, Junta de Anadaluc\'\i a Grants No. FQM325, N0. P06-FQM-01642. Minist\'erio de Ci\^encia e Tecnologia, CNPq Proc. No. 303774/2009-6. Minist\'erio de Educa\c{c}\~ao, CAPES/DGU Proc. No. 23038010833/2010-37. }

\title{A class of surfaces related to a problem posed by \'{E}lie Cartan} 
\author{Antonio Mart\'{\i}nez and Pedro Roitman}
\vspace{.1in}

\date{}
\maketitle
\noindent Departamento de Geometr\'\i a y Topolog\'\i a, Universidad de Granada, E-18071 Granada, Spain\\ 
e-mail: amartine@ugr.es

\vspace{.1in}

\noindent Departamento de Matem\'atica, Universidade de Bras\'\i lia, D.F., Brazil 70910-100\\
e-mail: roitman@mat.unb.br 

\begin{abstract}
We introduce a class of surfaces in euclidean space motivated by a problem posed by \'{E}lie Cartan. This class furnishes what seems to be the first examples of pairs of non-congruent surfaces in euclidean space such that, under a diffeomorphism $\Phi$, lines of curvatures are preserved and principal curvatures are switched. We show how to construct such surfaces using holomorphic data and discuss their relation with minimal surfaces. We also prove that if the diffeomorphism $\Phi$ preserves the conformal class of the third fundamental form, then all examples belong to the class of surfaces that we deal with in this work.   
\end{abstract}

\noindent 2010 {\it  Mathematics Subject Classification}: 53A05, 53A10

\noindent {\it Keywords:} Principal curvatures, minimal surfaces, Ribaucour surfaces.

\section{Introduction}

In his classic book about exterior differential systems and its applications to differential geometry, \cite{Cartan}, \'{E}. Cartan considered the problem of finding pairs of non-congruent surfaces $M$ and $M^{*}$ in euclidean space and a diffeomorphism $\Phi: M \mapsto M^{*}$ such that $\Phi$ preserves lines of curvature and also the mean and Gaussian curvatures.   

As Cartan remarked, there are two cases to be considered. To distinguish them, we introduce some notation. We denote by $k_{i}$, $i=1,2$, the principal curvatures associated to principal directions $e_{i}$ of $M$, and by $k^{*}_{i}$, $e^{*}_{i}$ the corresponding functions and vector fields on $M^{*}$, where the direction defined by $e_{i}$ is mapped into the direction defined by $e^{*}_{i}$ by the differential $d\Phi$.

Then, since the mean and Gaussian curvatures are preserved we have two possibilities. First case: $k_{1}=k^{*}_1$ and $k_{2}=k^{*}_2$. Second case: $k_{1}=k^{*}_2$ and $k_{2}=k^{*}_1$.

The first case can be rephrased by saying that the shape operator is preserved by $\Phi$, and it is certainly the one that has atracked more attention up to now. For this case, there are several explicit examples that were first discussed by Gambier and Finikoff, \cite{GF} and more recently by Bryant, \cite{Br}, and Ferapontov, \cite{Fer}. A very interesting aspect of this first case is that once a pair of surfaces $M$ and $M^{*}$ is found, there is at least a one (real) parameter family of surfaces that realize the same abstract shape operator. The existence of solutions depending on real parameters motivated Ferapontov to relate the geometric problem to some integrable systems. 

Surprisingly, as far as we know, there seems to be no discussion in the literature about the second case of Cartan's problem (except for his own) and not a single explicit example. Cartan formulates the problem in the framework of exterior differential systems and uses the technique described in \cite{Cartan} to determine the degree of freedom, or arbitrary initial data, we would have for this problem in the real analytic category. 

It is a curious fact that, from Cartan's analysis, the solution of the problem for both cases depends on 4 arbitrary real analytic functions of one variable. With this in perspective, the lack of examples in the second case contrasts with their abundance in the first case.

In this work we exhibit a class of surfaces that provides to our knowledge the first explicit examples of solutions for the second case of Cartan's problem, which, from now on, we will abbreviate by C2. This class of surfaces is defined by a simple geometric property and has some additional features that may capture the attention of geometers, such as, holomorphic representation and a local geometric relation with minimal surfaces in euclidean space. Another interesting aspect of this class is that we can define a local notion of duality, so that surfaces of this class naturally come in pairs. We will show that each such pair is an example for C2.

Besides providing a class of examples for C2, we show that, under the additional hypothesis that the diffeomorphism $\Phi$ is a conformal map with respect to the third fundamental forms of $M$ and $M^{*}=\Phi(M)$, all examples are contained in this class (Theorem 2). This can be considered as a first step in the classification of solutions for C2.

Since the classical concepts used to define this class of surfaces originated in the seminal ideas of A. Ribaucour we have chosen to call them Ribaucour surfaces in honour of this great geometer.

From a historical perspective, as far as we know, Ribaucour surfaces are mentioned en passant by Bianchi in the 1903 edition of the Lezioni, \cite{Bi} page 236, while discussing a special case of Weingarten's method to find surfaces isometric to a given surface. It is curious that Bianchi himself didn't seem to be particularly interested on Ribaucour surfaces and his brief remark about them does not appear in the later editions of the Lezioni. We thank the late and great geometer Bianchi for giving us the opportunity to expose these properties.

In section \ref{RS}, we define Ribaucour surfaces and show how to generate examples using as initial data two holomorphic functions. By reversing the role of these functions, we introduce a notion of duality for Ribaucour surfaces and show some geometric relations between a Ribaucour surface and its dual. In particular, we show that they are examples for C2.

In section \ref{ribmin}, we use the classic Ribaucour sphere congruences to show how Ribaucour surfaces are related to minimal surfaces, we illustrate the theory with examples of Ribaucour surfaces generated using Ennepper's surface and a catenoid. 

In section \ref{conformalcase} we give a proof of theorem \ref{classification} using the relation between Ribaucour and minimal surfaces.

\section{Ribaucour Surfaces and their duals}
\label{RS}

In this section we define Ribaucour surfaces and show a local representation in terms of a pair of holomorphic functions. This representation leads to the definition of the concept of dual Ribaucour surface, which itself is a Ribaucour surface. We then show that a Ribaucour surface and its dual are solutions for C2.

\subsection{Geometric definition and local representation}
 
To define Ribaucour surfaces we need first to recall the classical concept of middle sphere congruence. Let $\Sigma$ be an oriented surface in $\mathbb{R}^3$ and $H$ and $K$ denote, respectively, the  mean and Gaussian curvatures of $\Sigma$. For $p \in \Sigma$ consider the point $M(p)$, called the middle point associated to $p$, along the normal line to $\Sigma$ at $p$ that we get by moving from $p$ a signed distance $\frac{H}{K}$ along the normal line. For a local parametrization $X$ of $\Sigma$ with Gauss map $N$ we have
\[
M(p)=X(p)+\frac{H}{K}N(p). 
\]
The locus $M$ of all middle points associated to the points of $\Sigma$ is called the middle surface associated to $\Sigma$.

The two parameter family of spheres centered at points $M(p)$ of the middle surface and with radius $\left|\frac{H(p)}{K(p)}\right|$ is called the middle sphere congruence associated to $\Sigma$.
      
\begin{definition}
An oriented smooth surface in $\mathbb{R}^3$ is a Ribaucour surface if all its middle spheres intersect a fixed sphere along great circles.
\end{definition}

\begin{remark}
The notion of Ribaucour surface is invariant under homothety and translation in $\mathbb{R}^3$. For this reason, without loss of generality, we will simplify our exposition by only considering the case where the fixed sphere is the unit sphere centered at the origin. 
\end{remark}

For a local parametrization as above, it follows from elementary geometry that a surface $\Sigma$ is a   Ribaucour surface if and only if the following relation holds.
\begin{equation}
\label{relation}
<X,X>+\frac{2H}{K}<X,N>+1=0.
\end{equation}

From (\ref{relation}) it is natural to introduce the support function defined by $\rho=<X,N>$ and recall that, assuming that $K\neq 0$, the immersion $X$ can be written as

\begin{equation}
\label{supportparam}
 X = \nabla \rho + \rho N,
\end{equation} 
where $\nabla$ denotes the gradient with respect to the third fundamental form $III = <dN,dN>$.

We shall make use of the classical expression for $\frac{H}{K}$ in terms of the support function $\rho$,
\begin{equation}
-\frac{2 H}{K} = \Delta \rho + 2 \rho,
\label{hk}
\end{equation}

where $\Delta$ is the Laplace operator with respect to $III$.

\begin{lemma}
\label{difeqrho}
Let $X$ be an immersion as in (\ref{supportparam}). The immersion $X$ defines a   Ribaucour surface if and only if the support function $\rho$ satisfies the following equation.
\begin{equation}
\label{pde}
\rho^2 + \rho \Delta\rho = 1 + |\nabla \rho|^2.
\end{equation}
\end{lemma}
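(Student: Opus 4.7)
The proof is essentially a direct substitution once we understand the geometry of the support parametrization. My plan is to compute $\langle X, X\rangle$ and $\langle X, N\rangle$ in terms of $\rho$, plug everything into the Ribaucour condition \eqref{relation}, and then use \eqref{hk} to eliminate $H/K$.

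The key observation is that the gradient $\nabla \rho$ taken with respect to the third fundamental form $III = \langle dN, dN\rangle$ is tangent to $\Sigma$: since $III$ is just the pullback of the round metric on $S^2$ by the Gauss map $N$, the vector $\nabla \rho$ lies in $T_{N(p)} S^2 = T_p \Sigma$, and hence is orthogonal to $N$. Moreover, the $III$-norm of $\nabla \rho$ coincides with its Euclidean norm as a vector in $\R^3$. Granting this, from $X = \nabla \rho + \rho N$ one immediately obtains
\[
\langle X, N\rangle = \rho, \qquad \langle X, X\rangle = |\nabla \rho|^2 + \rho^2.
\]

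I would then substitute these two expressions into \eqref{relation}, which becomes
\[
|\nabla \rho|^2 + \rho^2 + \frac{2H}{K}\,\rho + 1 = 0.
\]
Replacing $\frac{2H}{K}$ by $-(\Delta \rho + 2\rho)$ via \eqref{hk} and simplifying the $\rho^2$ terms yields precisely
\[
\rho^2 + \rho\,\Delta \rho = 1 + |\nabla \rho|^2,
\]
which is \eqref{pde}. Since every step is an algebraic equivalence, one obtains both implications at once.

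There isn't really a serious obstacle here; the only subtlety is the interpretation of $\nabla \rho$ as a genuine tangent vector in $\R^3$ orthogonal to $N$, together with the identification between the $III$-norm and the Euclidean norm on this tangent vector. Once those conventions are fixed (as is implicit in writing the support parametrization \eqref{supportparam}), the lemma reduces to a one-line computation.
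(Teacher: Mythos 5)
Your proof is correct and is exactly the computation the paper intends: the paper's proof consists of the single line ``Straightforward, just use (\ref{relation}) and (\ref{hk})'', and you have simply filled in that substitution, including the (correct) justification that $\nabla\rho$ is orthogonal to $N$ and that its $III$-norm agrees with its Euclidean norm. No further comment is needed.
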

\begin{proof}
Straightforward, just use (\ref{relation}) and (\ref{hk}).
\end{proof}

\begin{remark}
\label{constcurvature}
From a geometric point of view, it is worth mentioning that (\ref{pde}) is satisfied if and only if the quadratic form $\frac{1}{\rho^2}III$ has constant intrinsic curvature equal to 1.
\end{remark}

We will now see that the general solution of (\ref{pde}) defined on a simply connected domain of the unit sphere can be written explicitly in terms of two holomorphic functions. We state this more precisely in the following lemma.

\begin{lemma}
\label{solutionsrepre}
Let $\Omega$ be a regular simply connected domain in the unit sphere $\mathbb{S}^2$. Assume that $\Omega$ is parametrized in the following way
\begin{equation}
\label{gaussmap}
N=(\frac{2\Re{f_{1}(z)}}{1+|f_{1}(z)|^2},\frac{2\Im{f_{1}(z)}}{1+|f_{1}(z)|^2},\frac{|f_{1}(z)|^2-1}{1+|f_{1}(z)|^2}),
\end{equation}
where $f_{1}$ is a holomorphic function defined in a simply connected domain $\Lambda$ of $\mathbb{C}$. Let $f_{2}:\Lambda \longrightarrow \mathbb{C}$ be holomorphic and such that $f'_{2}(z)\neq 0$. The function 
$\rho:\Omega\longrightarrow \mathbb{R}$ defined by
\begin{equation}
\label{repre}
\rho=\frac{|f'_{1}(z)|(1+|f_{2}(z)|^2)}{|f'_{2}(z)|(1+|f_{1}(z)|^2)},
\end{equation}
is a solution of (\ref{pde}). Conversely, every solution $\rho$ of (\ref{pde}) can be represented in the form (\ref{repre}).
\end{lemma}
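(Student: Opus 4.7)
The plan is to exploit Remark \ref{constcurvature}, which turns equation (\ref{pde}) into the single assertion that the conformal metric $\frac{1}{\rho^{2}} III$ has constant Gaussian curvature $1$. Under the parametrization (\ref{gaussmap}), $III$ is the pullback of the round metric of $\mathbb{S}^{2}$ via stereographic projection, and therefore reads
\[
III=\frac{4|f'_{1}(z)|^{2}}{(1+|f_{1}(z)|^{2})^{2}}|dz|^{2}
\]
on the simply connected domain $\Lambda$. Hence the lemma is equivalent to classifying the positive conformal factors $\rho^{-2}$ on $\Lambda$ for which the above metric, divided by $\rho^{2}$, has constant curvature $1$.

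For the direct statement, I would substitute the candidate (\ref{repre}) into the displayed expression for $III$. A one-line cancellation gives
\[
\frac{1}{\rho^{2}}III=\frac{4|f'_{2}(z)|^{2}}{(1+|f_{2}(z)|^{2})^{2}}|dz|^{2},
\]
which is precisely the pullback of the round metric of $\mathbb{S}^{2}$ via the holomorphic map defined by $f_{2}$ (composed with inverse stereographic projection). Since the round metric has constant curvature $1$ and $f'_{2}\neq 0$ ensures that the pullback is a genuine Riemannian metric, Remark \ref{constcurvature} immediately tells us that $\rho$ solves (\ref{pde}).

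For the converse, given a solution $\rho$ of (\ref{pde}), I would write
\[
\frac{1}{\rho^{2}}III = e^{2\sigma}|dz|^{2}
\]
on $\Lambda$ for a smooth real function $\sigma$. The condition that this conformal metric has curvature $1$ is Liouville's equation $-\Delta\sigma=e^{2\sigma}$, where $\Delta$ now denotes the Euclidean Laplacian. On the simply connected domain $\Lambda$, the classical integration of Liouville's equation produces a holomorphic function $f_{2}$ with $f'_{2}\neq 0$ such that $e^{\sigma}=\frac{2|f'_{2}|}{1+|f_{2}|^{2}}$. Comparing the two expressions for $\frac{1}{\rho^{2}}III$ yields (\ref{repre}). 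The only subtle point of the argument is this last step, which rests on the classical fact that every solution of Liouville's equation on a simply connected planar domain has the stated form; equivalently, a simply connected surface of constant curvature $1$ admits a developing map into $\mathbb{S}^{2}$, whose composition with stereographic projection supplies $f_{2}$.
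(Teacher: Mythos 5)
Your proof is correct and follows essentially the same route as the paper: the converse rests, exactly as in the paper, on Remark \ref{constcurvature} together with the classical local representation of constant-curvature-$1$ conformal metrics by a holomorphic function $f_{2}$. The only (welcome) difference is that you also run the forward direction through the constant-curvature characterization, where the paper merely asserts that substituting (\ref{repre}) into (\ref{pde}) is ``easy to check''.
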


\begin{proof}

It is easy to chek that $\rho$ defined by (\ref{repre}) is a solution of (\ref{pde}). To prove the converse, we note that from remark \ref{constcurvature} the metric $d\sigma^2=\frac{1}{\rho^2}<dN,dN>$ has constant curvature equal to 1. It is well known that locally such metrics with constant curvature equal to 1 can be written as
\[
d\sigma^2=\frac{4|f'_2(z)|}{(1+|f_2(z)|^2)^2}|dz|^2,
\]
where $f_{2}(z)$ is holomorphic. By the same reason, locally $<dN,dN>$ is written as
\[
<dN,dN>=\frac{4|f'_1(z)|}{(1+|f_1(z)|^2)^2}|dz|^2,
\]
where $f_{2}(z)$ is holomorphic.

It is then clear the $\rho$ can be written as in (\ref{repre}).

\end{proof}

We can now give a local representation for Ribaucour surfaces in terms of holomorphic data.
\begin{proposition}
\label{localrep}
Let $S$ be a   Ribaucour surface. For every point $p\in S$ there is a simply connected neighborhood $V_{p}$ of $p$ in $S$ and holomorphic functions $f_{1}$ and $f_{2}$ defined on a simply connected domain of $\mathbb{C}$ such that $V_{p}$ is parametrized by
\begin{equation}
\label{parametrization}
X=\nabla \rho + \rho N,
\end{equation}
where $N$ and $\rho$ are given respectively by (\ref{gaussmap}) and (\ref{repre}).

Conversely, for every pair of holomorphic functions $f_{1}$ and $f_{2}$ defined on a simply connected domain of $\mathbb{C}$, if we define  $N$ and $\rho$ as in (\ref{gaussmap}), (\ref{repre}) and consider the map $X$ defined by (\ref{parametrization}), then, for points where $X$ is an immersion, its image is a   Ribaucour surface.  
\end{proposition}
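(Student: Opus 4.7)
The plan is to prove the two directions separately, leveraging Lemmas \ref{difeqrho} and \ref{solutionsrepre} to package most of the technical content.

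For the forward direction, starting from a Ribaucour surface $S$ and $p \in S$, I would first note that the middle-sphere construction requires $K \neq 0$, so $III = \langle dN, dN \rangle$ is a Riemannian metric near $p$. Classical existence of isothermal coordinates for $III$ then produces a simply connected neighborhood $V_p$ and a complex coordinate $z$ in which the Gauss map, composed with stereographic projection, yields a holomorphic $f_1$ realizing the form (\ref{gaussmap}). The support function $\rho = \langle X, N \rangle$ of $S$ satisfies the intrinsic expression (\ref{supportparam}) and, by Lemma \ref{difeqrho}, the PDE (\ref{pde}). Invoking Lemma \ref{solutionsrepre} furnishes a holomorphic $f_2$ representing $\rho$ in the form (\ref{repre}), so $V_p$ is parametrized as in (\ref{parametrization}).

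For the converse, given holomorphic $f_1$ and $f_2$, define $N$ and $\rho$ by (\ref{gaussmap}) and (\ref{repre}); Lemma \ref{solutionsrepre} guarantees that $\rho$ solves (\ref{pde}). Set $X = \nabla \rho + \rho N$ and work at a point where $X$ is an immersion. The decisive step is to check that $N$ is the unit normal of this immersion. Since $\nabla \rho$ is tangent to $\mathbb{S}^2$, we have $\langle \nabla \rho, N \rangle = 0$; differentiating this identity yields $\langle d(\nabla \rho), N \rangle = -\langle \nabla \rho, dN \rangle = -d\rho$. Combined with $\langle dN, N \rangle = 0$, this gives
\[
\langle dX, N \rangle = \langle d(\nabla \rho), N \rangle + d\rho + \rho \langle dN, N \rangle = 0,
\]
so $N \perp dX$ as needed. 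Consequently $\rho$ is the support function and $N$ the Gauss map of the immersion $X$, and Lemma \ref{difeqrho} applied in reverse shows that the image is a Ribaucour surface.

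The essentially only nontrivial step is this orthogonality computation in the converse direction, which certifies that the $N$ built from the holomorphic data really is the Gauss map of $X$; once it is established, the Ribaucour condition follows from Lemma \ref{difeqrho} and the holomorphic parametrization is already encoded in Lemma \ref{solutionsrepre}. The remaining ingredients — local isothermal coordinates for $III$, the stereographic form of the Gauss map, and the classical identity (\ref{supportparam}) — are standard and require no further work.
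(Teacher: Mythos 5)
Your proof is correct and follows essentially the same route as the paper's: both directions are reduced to Lemmas \ref{difeqrho} and \ref{solutionsrepre}, using $K\neq 0$ so that the Gauss map is a local diffeomorphism in the forward direction. The only difference is that you spell out the verification that $N$ is normal to $X=\nabla\rho+\rho N$ (so that $\rho$ really is the support function and Lemma \ref{difeqrho} applies), a step the paper leaves implicit; your computation of $\langle dX,N\rangle=0$ is correct.
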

\begin{proof}
For a   Ribaucour surface the Gaussian curvature is not zero so the Gauss map is a local diffeomorphism and we may consider a local parametrization of $\mathbb{S}^2$ as in lemma \ref{solutionsrepre} and use this lemma. The converse follows simply from lemmas \ref{difeqrho} and \ref{solutionsrepre}.  
\end{proof}

\subsection{Duality for Ribaucour surfaces and C2}
\label{dualrib}

Before defining a notion of duality for Ribaucour surfaces we show that they are Laguerre isothermic, which means that they admit local parametrizations by lines of curvature that are conformal with respect to the conformal structure defined by the third fundamental form. This fact is related to our discussion in section \ref{conformalcase}  and is also a useful tool to simplify the computations that follow. There has been a recent revival of Laguerre differential geometry and the interested reader may consult \cite{AGL} and references therein.
  
\begin{proposition}
\label{laguerreisothermic}
Ribaucour surfaces are Laguerre isothermic.
\end{proposition}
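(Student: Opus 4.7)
The plan is to use the holomorphic representation from Proposition \ref{localrep} to produce, at every non-umbilic point, a local holomorphic reparametrization of $z$ that is simultaneously conformal for $III$ and aligned with the lines of curvature. First I would note that in the $z$-coordinate of the representation, $III = \mu_1^2|dz|^2$ with $\mu_1 = 2|f_1'|/(1+|f_1|^2)$ is already conformal, so the task reduces to finding a holomorphic change of variable $w(z)$ whose coordinate lines are curvature lines. From $X = \nabla\rho + \rho N$ written in Gauss-map coordinates one checks that the curvature directions on $\Omega$ are the eigendirections (with respect to $g = III$) of the symmetric endomorphism $\mathrm{Hess}_g\rho + \rho\,\mathrm{Id}$, and hence are equivalently encoded by the trace-free quadratic differential $Q = \mathrm{Hess}_g(\rho)_{zz}\,dz^2$.

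The key step will be to show that $Q$ factors as a positive real function times a holomorphic quadratic differential. I would write $\rho = \mu_1/\mu_2$, set $\alpha_i = \partial_z\log\mu_i = f_i''/(2f_i') - f_i'\bar f_i/(1+|f_i|^2)$, and use the identity $\partial_z\alpha_i - \alpha_i^2 = \tfrac{1}{2}\{f_i,z\}$ (where $\{\cdot,z\}$ denotes the Schwarzian derivative). Feeding this into $\mathrm{Hess}_g(\rho)_{zz} = \rho_{zz} - 2\alpha_1\rho_z$ should yield
\[
\mathrm{Hess}_g(\rho)_{zz} = \frac{\rho}{2}\bigl(\{f_1,z\} - \{f_2,z\}\bigr),
\]
so that $Q = \tfrac{\rho}{2}\,\psi(z)\,dz^2$ with $\psi := \{f_1,z\} - \{f_2,z\}$ holomorphic in $z$.

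At a point where $\psi \neq 0$ (that is, away from the umbilics, which are isolated generically), I would then introduce the local holomorphic coordinate $w$ defined by $dw = \sqrt{\psi/2}\,dz$. Since $w$ is holomorphic in $z$ the new coordinate is still conformal for $III$, and in it $Q = \rho\,dw^2$ has real coefficient $T_{ww}=\rho$. Writing $w = u+iv$, a direct inspection of the real/imaginary parts forces $\partial_u$ and $\partial_v$ to diagonalize the trace-free part of $\mathrm{Hess}_g\rho + \rho\,\mathrm{Id}$, so that $(u,v)$ is a simultaneous conformal $III$-parametrization and curvature-line parametrization---precisely the Laguerre isothermic condition.

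The main obstacle will be the Schwarzian identity displayed above. The underlying computation is elementary, but the substantive observation is the exact cancellation of the non-holomorphic $\bar f_i$-terms inside $\partial_z\alpha_i - \alpha_i^2$, which leaves behind only the purely holomorphic Schwarzian of $f_i$; once that structural fact is in hand, the remaining extraction of a Laguerre isothermic coordinate is automatic.
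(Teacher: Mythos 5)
Your plan is correct, and all the key identities check out: with $\alpha_i=\partial_z\log\mu_i$ one indeed has $\partial_z\alpha_i-\alpha_i^2=\tfrac12\{f_i,z\}$ (the $\bar f_i$-terms cancel exactly as you predict), and since $\rho=\mu_1/\mu_2$ gives $\rho_z=\rho(\alpha_1-\alpha_2)$, the combination $(\alpha_1-\alpha_2)^2+\partial_z(\alpha_1-\alpha_2)-2\alpha_1(\alpha_1-\alpha_2)$ collapses to $(\partial_z\alpha_1-\alpha_1^2)-(\partial_z\alpha_2-\alpha_2^2)$, yielding your formula $\rho_{zz}-2\alpha_1\rho_z=\tfrac{\rho}{2}\bigl(\{f_1,z\}-\{f_2,z\}\bigr)$. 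The central object is the same as in the paper's proof --- the paper sets $\mu=\tfrac{2}{\rho}(\rho_{zz}-2\tau_z\rho_z)$ with $\tau_z=\alpha_1$ and verifies $\mu_{\bar z}=0$ --- but your route to holomorphy is genuinely different: the paper never uses the holomorphic representation of $\rho$, only that $\rho$ solves (\ref{pde}) and that $\tau$ solves the Liouville equation (\ref{curvone}), and then checks $\mu_{\bar z}=0$ by differentiating; you instead feed in the explicit representation (\ref{repre}) and obtain the closed form $\mu=\{f_1,z\}-\{f_2,z\}$, which is manifestly holomorphic. Your version buys more: an explicit Laguerre--Hopf differential in terms of Schwarzian derivatives of the data, the identification of umbilics as the zeros of $\{f_1,z\}-\{f_2,z\}$, and an actual construction of the curvature-line conformal coordinate via $dw^2=\tfrac{1}{2}\psi\,dz^2$, rather than an appeal to the criterion of \cite{MN}. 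The paper's version is shorter and works directly from the PDE without invoking Lemma \ref{solutionsrepre}. The only caveat in your argument is the restriction to points where $\psi\neq 0$; this is the standard exclusion of umbilic points (where no curvature-line coordinate exists anyway) and does not affect the statement, though you should note that if $\psi\equiv 0$ the surface is totally umbilic and the claim is vacuous or trivial there.
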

\begin{proof}
Consider a local parametrization as in proposition \ref{localrep} and let $h_{ij}$, $i,j=1,2$, be the coefficients of the second fundamental form. From \cite{MN} we know that a surface is Laguerre isothermic if and only if there is a function $\psi$ such that
\begin{equation}
\label{laguerrehopf}
\mu=\psi (\frac{h_{22}-h_{11}}{2}+ih_{12}),
\end{equation}
is holomorphic.

We will show that for Ribaucour surfaces we can choose $\psi=\rho^{-1}$ in (\ref{laguerrehopf}) and verify that $\mu$ is holomorhic. It is convenient to use the complex parameter $z$ in our computations. We will write
\[
<dN,dN>=e^{2\tau}|dz|^2,
\]
where $\tau$ is a solution of
\begin{equation}
\label{curvone}
\Delta_{0}\tau+e^{2\tau}=0.
\end{equation}
If we write our immersion as $X=\nabla \rho +\rho N$, it follows that
\[
\frac{h_{22}-h_{11}}{2}+ih_{12}=2<X_{z},N_{z}>.
\]
Using the expression 
\[
\nabla \rho=2e^{-2\tau}(\rho_{z} N_{\bar{z}}+\rho_{\bar{z}}N_z)
\]
and the fact that $z$ is a conformal parameter we obtain
\[
\mu=\frac{2}{\rho}(-2\tau_{z} \rho_{z}+\rho_{zz}).
\]
If we note that equations (\ref{pde}) and (\ref{curvone}) are equivalent respectively to 
\[
\rho^2+4e^{-2\tau}\rho \rho_{z\bar{z}}=1+4e^{-2\tau}\rho_{z} \rho_{\bar{z}},
\]
and
\[
\tau_{z\bar{z}}=-\frac{e^{2\tau}}{4},
\]
then a straightforward computation shows that indeed $\mu_{\bar{z}}=0$.    
\end{proof}

Now we define the local concept of dual Ribaucour surface. By proposition \ref{laguerreisothermic}, given a Ribaucour surface we may consider a local parametrization by lines of curvature and conformal with respect to the third fundamental form and also write it as in (\ref{supportparam}). Now consider $f_{1}$ and $f_{2}$ as in lemma \ref{solutionsrepre} and define the map $N^{*}$ by
\begin{equation}
\label{dualgaussmap}
N^{*}=(\frac{2\Re{f_{2}(z)}}{1+|f_{2}(z)|^2},\frac{2\Im{f_{2}(z)}}{1+|f_{2}(z)|^2},\frac{|f_{2}(z)|^2-1}{1+|f_{2}(z)|^2}),
\end{equation}
and the function $\rho^{*}=\rho^{-1}$. The map $X^{*}$ defined by
\begin{equation}
\label{dualparam}
 X^{*} = \nabla^{*} \rho^{*} + \rho^{*} N^{*},
\end{equation}  
is the local parametrization of the so called dual Ribaucour surface. We now proceed to show that if $X^{*}$ is an immersion then it is a local parametrization of a Ribaucour surface. Moreover, we show that the correspondence between points of these Ribaucour surfaces defined by $X$ and $X^{*}$ preserves lines of curvature and switches principal curvatures. In this way, Ribaucour surfaces and their corresponding duals provide a large class of examples for C2.

\begin{theorem}
Let $X$ and $X^{*}$ be as above. If $X^{*}$ is an immersion then it is a Ribaucour surface. Moreover, the correspondence between $X$ and $X^{*}$ preserves lines of curvature and if we denote by $k_{1}$, $k_{2}$ the principal curvatures of $X$ and by $k^{*}_{1}$, $k^{*}_{2}$ the principal curvatures of $X^{*}$ then $k_{1}=k^{*}_{2}$ and $k_{2}=k^{*}_{1}$.
\end{theorem}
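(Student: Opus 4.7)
The plan is to work throughout in the common complex parameter $z$ of Lemma~\ref{solutionsrepre} and to argue in three steps. First, the representation (\ref{repre}) is manifestly symmetric under the exchange $(f_1,f_2)\leftrightarrow (f_2,f_1)$ together with $\rho\leftrightarrow\rho^{-1}$, so applying Lemma~\ref{solutionsrepre} with the roles of $f_1$ and $f_2$ swapped produces $\rho^*=\rho^{-1}$ as a solution of the Ribaucour PDE (\ref{pde}) with respect to $III^*=\langle dN^*,dN^*\rangle$; Lemma~\ref{difeqrho} then forces $X^*$ to be a Ribaucour surface wherever it is an immersion.

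Next I would compare the geometric data of $X$ and $X^*$ in this parameter. Reading (\ref{gaussmap})--(\ref{repre}) off directly gives $III^*=\rho^{-2}III$, so if $III=e^{2\tau}|dz|^2$ and $III^*=e^{2\tau^*}|dz|^2$ then $\tau^*=\tau-\log\rho$; a short calculation using this together with $\rho^*=\rho^{-1}$ shows that the holomorphic differential from Proposition~\ref{laguerreisothermic} satisfies $\mu^*=-\mu$. Expanding $X_z=a\,N_{\bar z}+b\,N_z$ with $a=e^{-2\tau}\rho\mu$ and $b=2e^{-2\tau}\rho_{z\bar z}+\rho\in\mathbb{R}$, and analogously for $X^*$, one then obtains $a^*=-a$ at once, while the identity $b^*=b$ comes out after invoking the complex form $\rho^2+4e^{-2\tau}\rho\rho_{z\bar z}=1+4e^{-2\tau}\rho_z\rho_{\bar z}$ of the PDE (\ref{pde}) to rewrite the expression for $b^*$.

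Finally I would deduce the statement about lines of curvature and principal curvatures from the decomposition of $X_z$. A real parameter direction $e^{i\theta}\partial_z+e^{-i\theta}\partial_{\bar z}$ is principal if and only if $a\,e^{2i\theta}\in\mathbb{R}$, with corresponding principal curvature $k=-(b+a\,e^{2i\theta})^{-1}$. Since $a^*=-a$ and $b^*=b$, the same real directions are principal for $X^*$, so $d\Phi$, which acts as the identity on the parameter plane, preserves lines of curvature. Moreover, the two values $k=-(b\pm|a|)^{-1}$ of the principal curvatures of $X$ are attained at the two principal directions in one order, whereas the corresponding values $k^*=-(b\mp|a|)^{-1}$ for $X^*$ are attained in the opposite order (because $\arg a^*=\arg a+\pi$ flips which direction realizes which sign in $\pm|a|$), and this is exactly $k_1=k_2^*$ and $k_2=k_1^*$.

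The main obstacle is the identity $b^*=b$: this is the only place where the Ribaucour PDE (\ref{pde}) is essential, and without it the sign flip $a^*=-a$ alone would not produce the clean interchange of the two principal curvatures.
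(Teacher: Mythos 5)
Your argument is correct and follows essentially the same route as the paper: your identities $a^{*}=-a$ and $b^{*}=b$ are precisely the paper's $\mu^{*}=-\mu$ (equivalently $\tfrac{1}{k_2}-\tfrac{1}{k_1}=-(\tfrac{1}{k_2^{*}}-\tfrac{1}{k_1^{*}})$) and $\Delta\rho+2\rho=\Delta^{*}\rho^{*}+2\rho^{*}$ (equivalently $\tfrac{H}{K}=\tfrac{H^{*}}{K^{*}}$), since $b=2e^{-2\tau}\rho_{z\bar z}+\rho=-\tfrac{H}{K}$ and $a=e^{-2\tau}\rho\mu$, and both proofs invoke the PDE (\ref{pde}) at exactly the same point. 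Your explicit frame decomposition $X_z=aN_{\bar z}+bN_z$ merely makes the preservation of principal directions and the interchange of the two curvature values slightly more transparent than the paper's phrasing.
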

\begin{proof}
From the definition of the dual Ribaucour surface it follows that the third fundamental forms of $X$ and $X^{*}$ are related by 
\begin{equation}
\label{confthird}
III=\frac{III^{*}}{{\rho^{*}}^{2}}
\end{equation}
and by remark \ref{constcurvature}, we conclude that $X^{*}$ is a Ribaucour surface.

To prove the assertion about the relation between the lines of curvature and the principal curvatures we use a conformal parameter $z$ for $III$ and $III^{*}$ such that $X$ is a parametrization by lines of curvature. We may then write $III=e^{2\tau}|dz|^2$ and $III^{*}=e^{2{\tau}^{*}}|dz|^2$. Using (\ref{confthird}) we conclude that ${\tau}^{*}=\tau - \ln{\rho}$ and from this it follows from a simple computation that $\mu$, as in (\ref{laguerrehopf}) with $\psi=\rho^{-1}$, and the analogous function ${\mu}^{*}$ for $X^{*}$ are related by $\mu=-{\mu}^{*}$. From this it follows that
\[
\frac{1}{k_{2}}-\frac{1}{k_{1}}=-(\frac{1}{k^{*}_{2}}-\frac{1}{k^{*}_{1}}).
\]

Finally, a computation shows that $\Delta \rho + 2 \rho={\Delta}^{*} {\rho}^{*} + 2 {\rho}^{*}$ and from (\ref{hk}) it follows that $\frac{H}{K}=\frac{H^{*}}{K^{*}}$. Thus, $k_{1}=k^{*}_{2}$ and $k_{2}=k^{*}_{1}$. 
\end{proof}

For future use in the final section of this work we register how the fundamental forms of a pair of dual Ribaucour surfaces are related. 

\begin{proposition}
Let $M$ and $M^{*}$ be a pair of dual Ribaucour surfaces with fundamental forms given respectively by $(I,II,II)$ and $(I^{*},II^{*},III^{*})$ and let $\rho$, $H$ and $K$ denote respectively the support function, mean and Gaussian curvatures of $M$. Then 
 \begin{eqnarray}
I^{*}&=& \frac{1}{\rho^2} I -\frac{4 H}{K\rho^2} II + \frac{4 H^2 }{(K\rho)^2} III, \label{f1}\\
II^{*}&=& \frac{-1}{\rho^2} II + \frac{ 2H}{K\rho^2}  III, \label{f2}\\
III^{*}&=& \frac{1}{\rho^2} III, \label{f3} 
\end{eqnarray}
\end{proposition}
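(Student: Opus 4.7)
The plan is to deduce all three identities from the previous theorem together with basic linear algebra of the shape operator, rather than through any new differential computation. The key inputs are already in place: the preceding theorem tells us that the duality $\Phi\colon M\to M^{*}$ preserves principal directions and switches principal curvatures, so $k_{1}^{*}=k_{2}$ and $k_{2}^{*}=k_{1}$; in particular $H^{*}=H$ and $K^{*}=K$. The identity (\ref{f3}) is essentially a restatement of (\ref{confthird}), already established in the proof of the previous theorem.

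For (\ref{f2}) I would work at a non-umbilic point in the common principal frame $\{e_{1},e_{2}\}$, in which the three fundamental forms of $M$ and of $M^{*}$ are simultaneously diagonal. There one has the standard relations $II(e_{i},e_{i})=k_{i}\,I(e_{i},e_{i})$ and $III(e_{i},e_{i})=k_{i}^{2}\,I(e_{i},e_{i})$, with the analogous identities for the starred forms. Combining these with $k_{i}^{*}=k_{3-i}$ and $III^{*}=\rho^{-2}III$ yields $II^{*}(e_{i},e_{i})=k_{3-i}\,\rho^{-2}\,k_{i}^{-2}\,III(e_{i},e_{i})$, and a short manipulation using $k_{3-i}=2H-k_{i}$ and $k_{i}k_{3-i}=K$ rewrites this as $-\rho^{-2}II(e_{i},e_{i})+(2H/K\rho^{2})III(e_{i},e_{i})$, which is exactly (\ref{f2}).

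For (\ref{f1}) I would invoke the classical Cayley--Hamilton identity for the shape operator, namely the tensorial identity $K\,I - 2H\,II + III = 0$. Applied to $M^{*}$ and using $H^{*}=H$, $K^{*}=K$, it gives
\[
I^{*} \;=\; \frac{2H}{K}\,II^{*} \;-\; \frac{1}{K}\,III^{*}.
\]
Substituting (\ref{f2}) and (\ref{f3}) on the right expresses $I^{*}$ as a linear combination of $II$ and $III$, and then rewriting the $II$ piece through the same Cayley--Hamilton identity for $M$ in the form $\rho^{-2}I = (2H/K\rho^{2})II - (1/K\rho^{2})III$ reintroduces the $I$ term and produces precisely the coefficients $1/\rho^{2}$, $-4H/(K\rho^{2})$, $4H^{2}/(K\rho)^{2}$ appearing in (\ref{f1}).

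The whole argument is algebraic; the main (mild) point is that $H$ and $K$ must genuinely be invariant under the duality, and this is exactly what the switching $k_{i}^{*}=k_{3-i}$ from the previous theorem delivers, allowing the Cayley--Hamilton identity to close the computation without any further analytic input.
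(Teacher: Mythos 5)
Your argument follows essentially the same skeleton as the paper's: take (\ref{f3}) as immediate from the definition of duality (it is indeed just (\ref{confthird}) with $\rho^{*}=\rho^{-1}$), pass to a common principal frame in which all six forms are simultaneously diagonal, and use the switching $k_{i}^{*}=k_{3-i}$ from the preceding theorem. The paper solves (\ref{f3}) for the starred metric coefficients, $g^{*}_{11}=\frac{k_{1}^{2}}{k_{2}^{2}\rho^{2}}g_{11}$ and $g^{*}_{22}=\frac{k_{2}^{2}}{k_{1}^{2}\rho^{2}}g_{22}$, and then verifies (\ref{f1}) and (\ref{f2}) by direct substitution; your derivation of (\ref{f1}) from (\ref{f2}) and (\ref{f3}) via the Cayley--Hamilton identity $K\,I-2H\,II+III=0$ (applied to both surfaces, using $H^{*}=H$, $K^{*}=K$) is a tidier, coordinate-free way to close that step, and the coefficient bookkeeping there checks out.

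There is, however, a slip in your intermediate formula for (\ref{f2}). From $III^{*}(e_{i},e_{i})=(k_{i}^{*})^{2}I^{*}(e_{i},e_{i})$ with $k_{i}^{*}=k_{3-i}$ and $III^{*}=\rho^{-2}III$ one gets $I^{*}(e_{i},e_{i})=k_{3-i}^{-2}\rho^{-2}III(e_{i},e_{i})$, hence $II^{*}(e_{i},e_{i})=k_{i}^{*}\,I^{*}(e_{i},e_{i})=\rho^{-2}k_{3-i}^{-1}III(e_{i},e_{i})$, not $k_{3-i}\rho^{-2}k_{i}^{-2}III(e_{i},e_{i})$ as you wrote; the two differ by the factor $k_{3-i}^{2}/k_{i}^{2}$, so the advertised ``short manipulation'' cannot reach (\ref{f2}) from your expression at a non-umbilic point. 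With the corrected intermediate the manipulation does work: in terms of $I(e_{i},e_{i})$ the left side has coefficient $\frac{k_{i}^{2}}{k_{3-i}}$, while $-k_{i}+\frac{2H}{K}k_{i}^{2}=k_{i}\bigl(\frac{k_{i}+k_{3-i}}{k_{3-i}}-1\bigr)=\frac{k_{i}^{2}}{k_{3-i}}$, which is exactly (\ref{f2}). Once that line is repaired the proof is complete.
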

\begin{proof}
By the definition of duality for Ribaucour surfaces (\ref{f3}) is satisfied. Using a parametrization by lines of curvature we may write the fundamental forms as follows

\begin{eqnarray}
I &= &g_{11} du^2 + g_{22}dv^2, \\
II & = & k_1 g_{11} du^2 + k_2 g_{22}dv^2, \\
III & = & k_1^2 g_{11} du^2 + k_2^2 g_{22} dv^2, \\
I^{*} &= &g^{*}_{11} du^2 + g^{*}_{22}dv^2, \\
II^{*} & = & k_2 g^{*}_{11} du^2 + k_1 g^{*}_{22}dv^2,\\
III^{*} & = & k_2^2 g^{*}_{11} du^2 + k_1^2 g^{*}_{22} dv^2. 
\end{eqnarray}

From (\ref{f3}) and the above expressions for $III$ and $III^{*}$ we deduce that  $g^{*}_{11}=\frac{k^{2}_{1}}{k^{2}_{2}{\rho}^{2}}g_{11}$ and $g^{*}_{22}=\frac{k^{2}_{2}}{k^{2}_{1}{\rho}^{2}}g_{22}$. Using these relations it is a simple matter to check that (\ref{f1}) and (\ref{f2}) also hold. 
\end{proof}
 
\section{Ribaucour surfaces and minimal surfaces}
\label{ribmin}
The aim of this section is to give a brief exposition of the relation between Ribaucour and minimal surfaces that we have learned from Bianchi, \cite{Bi}. This relation appears while Bianchi deals with an example of a general technique created by Weingarten to find isometric surfaces to a given surface. The results in this section are used in the proof of theorem \ref{classification} in section \ref{conformalcase}.    

To show the relation between minimal and Ribaucour surfaces we need to recall the classical concept of Ribaucour sphere congruence. 

\begin{definition}
A Ribaucour sphere congruence is a smooth two parameter family of spheres such that the correspondence between its envelopes preserves lines of curvature.
\end{definition}

Bianchi considered the problem of finding Ribaucour sphere congruences such that one of the envelopes is a minimal surface. He formulated this problem in terms of a system of PDEs. For the reader's convenience we will refer to \cite{keti}, where one can find a contemporary discussion about this topic. 

Suppose that $X(u,v)$ is a local conformal parametrization by lines of curvature for a minimal surface with metric $ds^2=\varphi^2(du^2+dv^2)$ and Gauss map $N$. Consider the following system of PDEs for unknown functions $\Omega$, $\Omega_{1}$, $\Omega_{2}$ and $W$.
\begin{eqnarray}
\Omega_{1,v}&=&\Omega_{2}\frac{\varphi_{u}}{\varphi}, \label{rib1}\\
\Omega_{2,u}&=&\Omega_{1}\frac{\varphi_{v}}{\varphi}, \label{rib2}\\
\Omega_{u}&=&\varphi \Omega_{1}, \label{rib3}\\
\Omega_{v}&=&\varphi \Omega_{2}, \label{rib4}\\
W_{u}&=&\Omega_{1}k_{1}\varphi, \label{rib5}\\
W_{v}&=&\Omega_{2}k_{2}\varphi, \label{rib6}
\end{eqnarray}

To each solution of the above system we can associate a Ribaucour sphere congruence such that $X(u,v)$ is one of the envelopes. The above system is integrable in the Frobenius sense and it has the following first integral.

\begin{equation}
\label{firstintegral}
\Omega_{1}^2+\Omega_{2}^2+W^2-2c\Omega W+c_{2}W+c_{3}\Omega+c_{1}=0,
\end{equation}
where $c_{1}$,$c_{2}$, $c_{3}$ and $c\neq 0$ are constants.

We will show that for a solution of the above system such that $c_{2}=c_{3}=0$ and $c_{1}=1$ in  (\ref{firstintegral})  the surface defined by $W$ as the support function and with $N$ as its Gauss map is a Ribaucour surface. With this purpose in mind, we present a useful relation between the Hessian of $\Omega$ and the fundamental forms of the minimal surface.

\begin{proposition}
Let $X(u,v)$ be a local conformal parametrization by lines of curvature for a minimal surface with metric $ds^2=\varphi^2(du^2+dv^2)$ and $\Omega$ and $W$ part of the solution of the system (\ref{rib1})-(\ref{rib6}) with first integral given by (\ref{firstintegral}). Then ${\nabla}_{I}^{2} \Omega$, the Hessian quadratic form of $\Omega$ with respect to the first fundamental form of the minimal surface, can be expressed as follows.
\begin{equation}
\label{hessianomega}
{\nabla}^{2}_{I} \Omega=(cW-\frac{c_{3}}{2})I+(c\Omega-W-\frac{c_{2}}{2})II,
\end{equation}
where $I$ and $II$ are respectively the first and second fundametal forms of the minimal surface.
\end{proposition}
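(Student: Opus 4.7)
My plan is to work in a conformal line-of-curvature parametrization $(u,v)$ and exploit minimality, which forces $k_{2}=-k_{1}$; setting $k=k_{1}$, the fundamental forms become
\[
I=\varphi^{2}(du^{2}+dv^{2}),\qquad II=k\varphi^{2}(du^{2}-dv^{2}).
\]
So the right-hand side of (\ref{hessianomega}) is diagonal, with entries $[(cW-c_{3}/2)+k(c\Omega-W-c_{2}/2)]\varphi^{2}$ in $du^{2}$ and the same expression with $k$ replaced by $-k$ in $dv^{2}$. The strategy is to compute the three components of $\nabla^{2}_{I}\Omega$ in these coordinates and compare.

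First I would write out the Christoffel symbols of the conformal metric $\varphi^{2}(du^{2}+dv^{2})$ and express the three components of the Hessian as
\[
(\nabla^{2}_{I}\Omega)_{ij}=\Omega_{ij}-\Gamma^{k}_{ij}\Omega_{k}.
\]
Substituting the first derivatives $\Omega_{u}=\varphi\Omega_{1}$ and $\Omega_{v}=\varphi\Omega_{2}$ from (\ref{rib3})--(\ref{rib4}), and then using (\ref{rib1})--(\ref{rib2}) to rewrite the mixed second derivative $\Omega_{uv}$, a direct calculation collapses the off-diagonal term to zero, so the Hessian is automatically diagonal with entries
\[
(\nabla^{2}_{I}\Omega)_{uu}=\varphi\,\Omega_{1,u}+\varphi_{v}\Omega_{2},\qquad
(\nabla^{2}_{I}\Omega)_{vv}=\varphi\,\Omega_{2,v}+\varphi_{u}\Omega_{1}.
\]

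The key step is then to recognize that the two remaining diagonal identities are precisely what one obtains by differentiating the first integral (\ref{firstintegral}). Differentiating
\[
\Omega_{1}^{2}+\Omega_{2}^{2}+W^{2}-2c\Omega W+c_{2}W+c_{3}\Omega+c_{1}=0
\]
with respect to $u$, and substituting $\Omega_{2,u}=\Omega_{1}\varphi_{v}/\varphi$, $\Omega_{u}=\varphi\Omega_{1}$, $W_{u}=\Omega_{1}k_{1}\varphi$ from (\ref{rib2}), (\ref{rib3}), (\ref{rib5}), yields (after canceling the common factor $2\Omega_{1}$)
\[
\Omega_{1,u}+\Omega_{2}\frac{\varphi_{v}}{\varphi}=\varphi\bigl[(cW-\tfrac{c_{3}}{2})+k_{1}(c\Omega-W-\tfrac{c_{2}}{2})\bigr],
\]
which is exactly $(\nabla^{2}_{I}\Omega)_{uu}/\varphi$ matched against the $du^{2}$ coefficient of (\ref{hessianomega}). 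The symmetric computation differentiating with respect to $v$, using (\ref{rib1}), (\ref{rib4}), (\ref{rib6}) and $k_{2}=-k_{1}$, gives the $dv^{2}$ component.

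The main obstacle is really only the temporary cancellation issue of dividing by $\Omega_{1}$ or $\Omega_{2}$; I would handle this by noting that both sides of the target identity are smooth and polynomial in the data, so the equality obtained on the open set $\{\Omega_{1}\Omega_{2}\neq 0\}$ extends by continuity. Beyond that, the proof is essentially bookkeeping: the first integral encodes exactly the two scalar relations needed to identify the diagonal Hessian with the claimed linear combination of $I$ and $II$, and minimality ($k_{1}+k_{2}=0$) is what makes a single linear combination work for both diagonal entries simultaneously.
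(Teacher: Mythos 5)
Your proof is correct and follows essentially the same route as the paper's: the off-diagonal Hessian component vanishes by (\ref{rib1})--(\ref{rib2}), and the diagonal components are obtained by differentiating the first integral (\ref{firstintegral}) and substituting the system, cancelling the factor $2\Omega_{1}$ (resp.\ $2\Omega_{2}$). The only quibble is that minimality is not actually what makes both diagonal entries work --- the $dv^{2}$ computation produces $k_{2}$ directly, matching the $dv^{2}$ coefficient of $II$ whatever $k_{2}$ is --- but this does not affect the validity of the argument.
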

\begin{proof}
Note that (\ref{rib1}) and (\ref{rib2}) are equivalent to
\[
\frac{\Omega_{uv}}{\varphi}-\frac{\Omega_{u}\varphi_{v}}{\varphi^2}-\frac{\Omega_{v}\varphi_{u}}{\varphi^2}=0,
\]
which is equivalent to $\nabla_{I}^{2} \Omega(\partial_{u},\partial_{v})=0$.

Now if we compute the derivate with respect to $u$ of (\ref{firstintegral}) and use (\ref{rib2}),(\ref{rib3}) and (\ref{rib5}) we end up with

\[
\frac{\Omega_{uu}}{\varphi}-\frac{\Omega_{u}\varphi_{u}}{\varphi^2}+\frac{\Omega_{v}\varphi_{v}}{\varphi^2}=\varphi(cW-\frac{c_{3}}{2})+\varphi k_{1}(c\Omega-W-\frac{c_{2}}{2}),
\]
which we may interpret as
\[
{\nabla}^{2}_{I} \Omega (\partial_{u},\partial_{u})=(cW-\frac{c_{3}}{2})I(\partial_{u},\partial_{u})+(c\Omega-W-\frac{c_{2}}{2})II(\partial_{u},\partial_{u}).
\]

An analogous computation shows that the above relation is also valid if we replace $\partial_{u}$ by $\partial_{v}$ and this yields the desired result.
\end{proof}

The following proposition shows how to construct a Ribaucour surface from a minimal surface and a Ribaucour congruence of spheres.

\begin{proposition}
\label{ribaucourminimal}
Let $X(u,v)$ be a conformal parametrization of a minimal surface and $N$ be its Gauss map. Let $\Omega$, $\Omega_{1}$, $\Omega_{2}$ and $W$ be a solution of the system such that 
\begin{equation}
\label{constant}
\Omega_{1}^2+\Omega_{2}^2+W^2-2c\Omega W+1=0,
\end{equation}
where $c\neq 0$. Then if the map 
\[
Y=\nabla W + WN,
\]  
is an immersion its image is a Ribaucour surface.
\end{proposition}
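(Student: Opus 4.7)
The plan is to verify, via Lemma \ref{difeqrho}, that the support function $W$ of the surface $Y = \nabla W + WN$ satisfies the Ribaucour PDE (\ref{pde}) with respect to $III = \langle dN, dN\rangle$.

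First, I would exploit the special structure of a minimal surface: $H = 0$ together with $III - 2H\,II + K\,I = 0$ gives $III = -K\,I = k_1^2\,I$, so in two dimensions $\Delta = \tfrac{1}{k_1^2}\Delta_I$ and $|\nabla f|^2 = \tfrac{1}{k_1^2}|\nabla_I f|^2$ (where subscript $I$ denotes the analogous operators with respect to the first fundamental form of $X$). From (\ref{rib5})--(\ref{rib6}) one reads off $\Omega_1 = W_u/(k_1\varphi)$ and $\Omega_2 = W_v/(k_2\varphi)$; combined with $k_2 = -k_1$, the hypothesis (\ref{constant}) (with $c_1 = 1$, $c_2 = c_3 = 0$) becomes
\[
|\nabla W|^2 = \Omega_1^2 + \Omega_2^2 = 2c\Omega W - W^2 - 1.
\]

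The heart of the argument is the computation of $\Delta W$. Equations (\ref{rib3})--(\ref{rib6}) give the pleasant relations $W_u = k_1 \Omega_u$ and $W_v = k_2 \Omega_v$, so differentiating once more produces first-order correction terms in $W_u, W_v$ together with $\Omega_{uu}-\Omega_{vv}$. The latter is controlled by taking the difference of the two diagonal components of the Hessian formula (\ref{hessianomega}) with $c_2 = c_3 = 0$. The remaining first-order terms then cancel thanks to the Codazzi equations for a minimal surface in conformal lines-of-curvature parameters, which force $k_1 \varphi^2$ to be constant; what is left is the clean identity
\[
\Delta W = 2(c\Omega - W).
\]

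Inserting the two identities into (\ref{pde}) yields
\[
W^2 + W\,\Delta W = 2c\Omega W - W^2 = 1 + |\nabla W|^2,
\]
so $W$ satisfies the Ribaucour PDE, and the image of $Y$ is a Ribaucour surface by Lemma \ref{difeqrho}. The main obstacle is that (\ref{hessianomega}) is phrased in terms of $\Omega$, not $W$; the passage from the Hessian of $\Omega$ to the Laplacian of $W$ is where the minimality of $X$ is used in an essential way, through Codazzi, to guarantee that the first-order correction terms cancel.
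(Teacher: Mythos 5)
Your proof is correct and follows essentially the same route as the paper: both arguments hinge on extracting the identity $\Delta W = 2(c\Omega - W)$ from the Hessian formula (\ref{hessianomega}) and then substituting it, together with $\Omega_1^2+\Omega_2^2=|\nabla W|^2$, into the first integral (\ref{constant}). The only (cosmetic) differences are that you carry out the Laplacian computation in coordinates, using Codazzi to see that $k_1\varphi^2$ is constant, where the paper transfers the Hessian of $\Omega$ to a Hessian of $W$ with respect to $III$ and takes the trace, and that you conclude via (\ref{pde}) and Lemma \ref{difeqrho} while the paper identifies $c\Omega=-H/K$ and verifies (\ref{relation}) directly.
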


\begin{proof}
Let $H$ and $K$ denote respectively the mean and Gaussian curvatures of $Y$. We first show that
\[
c\Omega=-\frac{H}{K}.
\]
Since we know that
\[
-2\frac{H}{K}=\Delta W +2W,
\] 
it suffices to compute $\Delta W$. 

Note that from (\ref{rib3})-(\ref{rib6}) it follows that $\tilde{\nabla} \Omega=-{\nabla}W$, where $\tilde{\nabla}$ denotes the gradient operator with respect to the first fundamental form of $Y$. Using this fact and (\ref{hessianomega}), the Hessian quadratic form of $W$ respect to the third fundamental form $III$ is given by,
\begin{equation}
\label{hessianoW}
\nabla^2_{III} W = cW\ II+(c\Omega-W)\ III.
\end{equation}
Then, it follows that 
$$ \Delta W = 2 (c\Omega-W), $$
and we have the following expression for $c\Omega$
$$ c\Omega = \frac{\Delta W + 2 W}{2} = -\frac{H}{K}.$$
Using this fact and (\ref{constant}) we obtain 
\[
|\nabla W|^2+W^2+2\frac{H}{K}W+1=0,
\]
which proves that $Y$ defines a Ribaucour surface.
\end{proof}

For our purposes in the last section it is necessary to write explicitly the relation between the fundamental forms of surfaces that generalizes the construction given above.

\begin{proposition}
\label{fundformsrelation}
Let $(I_{m},II_{m},III_{m})$ and $(I_{r},II_{r},III_{r})$ be respectively the fundamental forms of a minimal surface with Gauss map $N$ and the surface generated from an arbitrary solution of (\ref{rib1})-(\ref{rib6}) with first integral given by (\ref{firstintegral}) such that $W$ is its support function and $N$ its Gauss map. These fundamental forms are related in the following way. 
\begin{eqnarray}
I_{r}&=&a^2I_{m}+2ab II_{m}+b^2 III_{m},\\
II_{r}&=&aII_{m}+bIII_{m}, \\
III_{r}&=&III_{m}, 
\end{eqnarray}
where $a=\frac{c_{3}}{2}-cW$ and $b=\frac{c_{2}}{2}-c\Omega$.
\end{proposition}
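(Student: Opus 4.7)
The third identity $III_r = III_m$ is immediate, since both $Y$ and the minimal surface share the Gauss map $N$ and $III = \langle dN, dN\rangle$ depends only on $N$. The remaining two identities will follow from a single vector-valued equality of $\mathbb{R}^3$-valued one-forms,
\[
dY = a\, dX - b\, dN,
\]
because contracting with itself and with $-dN$, and using $II_m = -\langle dX, dN\rangle$, yields exactly $I_r = a^2 I_m + 2ab\, II_m + b^2 III_m$ and $II_r = a\, II_m + b\, III_m$.

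To prove this vector identity I would work in a conformal parametrization $(u,v)$ by lines of curvature of the minimal surface: $I_m = \varphi^2(du^2 + dv^2)$, with principal curvatures $k_1 = -k_2 = k$. Since the Ribaucour sphere congruence preserves lines of curvature, $(u,v)$ is also principal for $Y$, and the vector identity reduces to the two scalar statements
\[
Y_u = (a + b k_1)\, X_u, \qquad Y_v = (a + b k_2)\, X_v.
\]
I would then expand $\nabla W$ in this frame (as the $III_m$-gradient pushed into $\mathbb{R}^3$ via $dN$), differentiate $Y = \nabla W + W N$, and substitute the Gauss--Weingarten equations for $X_{uu}$ and $X_{uv}$. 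Equation (\ref{rib2}) cancels the $X_v$-component of $Y_u$, and the $N$-components of $\partial_u(\nabla W)$, $W_u\, N$, and $W\, N_u$ automatically collapse against one another (this is essentially the tautology that $Y$ has support function $W$ and Gauss map $N$).

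The computational heart of the argument is that $\Omega_{1,u}$ (and analogously $\Omega_{2,v}$) is not prescribed directly by the system (\ref{rib1})--(\ref{rib6}); one must extract it by differentiating the first integral (\ref{firstintegral}) with respect to $u$ and substituting the known derivatives of $\Omega$, $W$, and $\Omega_2$. This is precisely the step where the coefficients $a = c_3/2 - cW$ and $b = c_2/2 - c\Omega$ emerge: after the substitution, the $X_u$-coefficient of $Y_u$ collapses to $a + b k_1$ once the term $W N_u = -k_1 W X_u$ is absorbed. The same argument, using (\ref{rib1}) and the $v$-derivative of (\ref{firstintegral}), handles $Y_v$.

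\textbf{Main obstacle.} The only delicate step is this bookkeeping manipulation of the differentiated first integral (\ref{firstintegral}) to match the coefficients $a$ and $b$; everything else is a mechanical application of the Gauss--Weingarten equations combined with the PDE system. Once the vector identity $dY = a\, dX - b\, dN$ is in place, the three fundamental form relations follow without further computation.
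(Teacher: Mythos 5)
Your proposal is correct and follows essentially the same route as the paper: the paper's proof also reduces everything to the single vector identity $dY(E) = \bigl(\tfrac{c_3}{2}-cW\bigr)E - \bigl(\tfrac{c_2}{2}-c\Omega\bigr)dN(E)$, obtained from the Hessian relation (\ref{hessianomega}), which was itself derived exactly as you describe — by differentiating the first integral (\ref{firstintegral}) and substituting the system (\ref{rib1})--(\ref{rib6}). The only cosmetic difference is that the paper packages the differentiated first integral into the previously proved Hessian formula rather than redoing the coordinate computation inside this proof.
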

\begin{proof}
From (\ref{hessianomega}) and using the same procedure and notation as in the proof of proposition \ref{ribaucourminimal},  we get, 
$$ dY(E) = (\frac{c_{3}}{2}-cW) E -   (\frac{c_{2}}{2}-c\Omega) dN(E), $$ for any tangent vector field $E$, which  let us to easily prove  the desired result. 
\end{proof}

\subsection{Ribaucour surfaces associated to Ennepper's surface and Catenoid}

We illustrate the content of proposition \ref{ribaucourminimal} with two explicit examples.

\subsubsection{A surface associated to Enneper's surface} 

Consider Enneper's surface parametrized as
\[
X(u,v)=(u(1-\frac{u^2}{3}+v^2),-v(1-\frac{v^2}{3}+u^2),u^2-v^2).
\]

The explicit solutions of the system (\ref{rib1})-(\ref{rib6}) are given in (\cite{keti}). It is easy to choose the constants that appear in the general solution in such a way that (\ref{constant}) is satisfied. For instance, if we take
\[
W=\frac{2\cosh{u}}{1+u^2+v^2},
\]
and 
\[
\Omega= (5+u^2+v^2)\cosh{u}+4u\sinh{u}+5\cosh{u},
\]
the corresponding Ribaucour surface is illustrated in figure 1.  

\begin{figure}[ht]
\hspace*{2.0cm}
\includegraphics[scale=0.4]{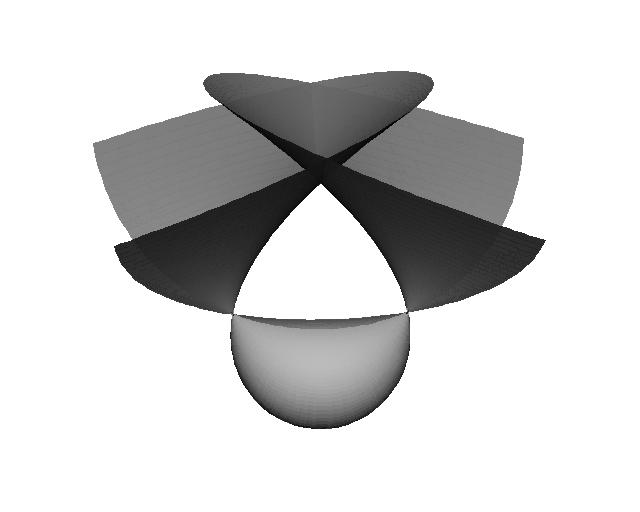} 
\caption{A Ribaucour surface associated to Enneper's surface}
\end{figure}
\subsubsection{A Surface associated to a catenoid}

Consider the parametrized catenoid given by

\[
X(u,v)=(\cosh{v}\cos{u},\cosh{v}\sin{u},v).
\]
As in the previous example, explicit solutions of the system (\ref{rib1})-(\ref{rib6}) are given in (\cite{keti}) and we may choose appropriate constants in such a way that (\ref{constant}) is satisfied. For instance, if we take
\[
W=\frac{1+u^2+v^2}{2\cosh{v}},
\]
and 
\[
\Omega= \frac{(u^2+v^2)\cosh{v}}{2}-2v\sinh{v}+\frac{5}{2}\cosh{v},
\]
the corresponding Ribaucour surface is illustrated in figure 2. 

\begin{figure}[ht]
\hspace*{2.0cm}
\includegraphics[scale=0.4]{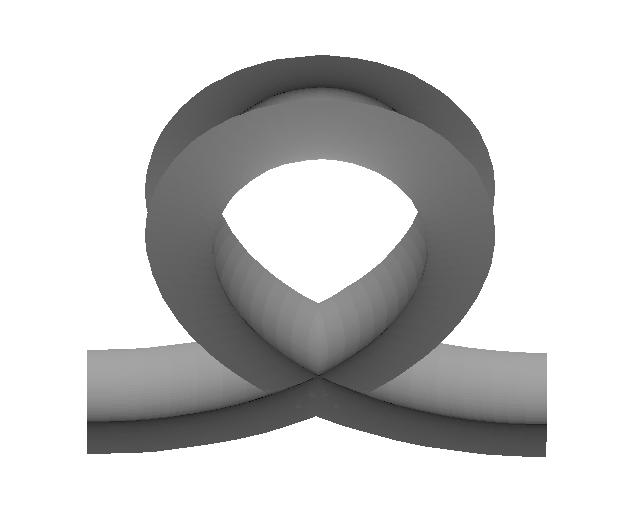} 
\caption{A Ribaucour surface associated to the catenoid}
\end{figure}

\section{Towards a classification of solutions to C2 }
\label{conformalcase}

In subsection \ref{dualrib} we have seen how pairs of dual Ribaucour surfaces are examples of solutions to C2. It is natural to ask how these pairs fit in the set of all solutions to the problem. The present section is devoted to a discussion about this question.

Let $M$ and $M^{*}$ be non congruent and umbilic free surfaces in euclidean space and consider a diffeomorphism $\Phi: M \mapsto M^{*}$ such that $\Phi$ preserves lines of curvature and switches the principal curvatures.  
We shall see, theorem \ref{classification}, that if in addition to the conditions above we impose that $\Phi$ is a conformal map with respect to the third fundamental forms of $M$ and $M^{*}$, then these surfaces are a pair of dual Ribaucour surfaces.

In the discussion that follows we will work with local coordinates $(u,v)$ such that the fundamental forms of $M$ and $M^{*}$ are given by
\begin{eqnarray}
I &= &g_{11} du^2 + g_{22}dv^2, \label{form1}\\
II & = & k_1 g_{11} du^2 + k_2 g_{22}dv^2, \label{form2}\\
III & = & k_1^2 g_{11} du^2 + k_2^2 g_{22} dv^2, \label{form3}\\
I^{*} &= &g^{*}_{11} du^2 + g^{*}_{22}dv^2, \label{form4}\\
II^{*} & = & k_1^* g^{*}_{11} du^2 + k_2^* g^{*}_{22}dv^2,\label{form5}\\
III^{*} & = & (k_1^*)^2 g^{*}_{11} du^2 + (k_2^*)^2 g^{*}_{22} dv^2,\label{form6} 
\end{eqnarray}
and the principal curvatures are related by $k_{1}=k^{*}_2$ and $k_{2}=k^{*}_{1}$.

The Codazzi equations for the pairs of quadratic forms $(I,II)$ and $(I^{*} , II^{*})$ can be written as
\begin{eqnarray*}
(k_1)_v &=& \frac{k_2-k_1}{2} (\log g_{11})_v \\
(k_2)_u &=& \frac{k_1-k_2}{2} (\log g_{22})_u \\
(k_2)_v &=& \frac{k_1-k_2}{2} (\log g^{*}_{11})_v \\
(k_1)_u &=& \frac{k_2-k_1}{2} (\log g^{*}_{22})_u, 
\end{eqnarray*}
and from them we can deduce that
$$  (k_1- k_2)_v = \frac{k_2-k_1}{2} (\log g_{11} g^{*}_{11})_v,$$
$$  (k_1- k_2)_u = \frac{k_2-k_1}{2} (\log g_{22} g^{*}_{22})_u.$$

Thus, there exists functions $\alpha=\alpha(u)$ and $\beta = \beta(v)$, such that 
$$ g_{11} g^{*}_{11}(k_1-k_2)^2=\alpha(u), \qquad g_{22} g^{*}_{22} (k_1-k_2)^2=  \beta(v).$$
By an appropriate change of variables, we can consider local parameters such that (\ref{form1})-(\ref{form6}) still hold and

\begin{equation} g_{11}^{*}=\frac{1}{g_{11} (k_1-k_2)^2}, \qquad  g_{22}^{*}=\frac{1}{g_{22}(k_1-k_2)^2}.
\end{equation}

For our purposes, it is convenient to introduce $$ \Lambda = \frac{k_2-k_1}{k_2 k_1}=2 \sqrt{\frac{(H)^2- K}{(K)^2}},$$ and to rewrite the Codazzi equations in the following form
\begin{eqnarray}
\left(\log (g_{11} k_1^2 \Lambda )\right)_v &= & 2 \Lambda^{-1} \left(\frac{H}{K}\right)_v,\label{c1}\\
\left(\log (g_{22} k_2^2 \Lambda )\right)_u &= &  -2 \Lambda^{-1} \left(\frac{H}{K}\right)_u.\label{c2}
\end{eqnarray}
In addition,  the Gauss equation for $M$ and $M^{*}$ can be written as 
 \begin{eqnarray}
 \sqrt{ g_{11} g_{22}} (k_2-k_1) &=&  \left(\frac{1}{k_1}\right)_{vv} -  \left(\frac{1}{k_1}\right)_{v} (\log \Lambda)_v -   \nonumber\\
 &-& \left(\frac{1}{k_2}\right)_{uu} +\left(\frac{1}{k_2}\right)_{u} (\log \Lambda)_u. \label{g1}
\end{eqnarray}

\begin{eqnarray}
 \frac{1}{\sqrt{ g_{11} g_{22}} (k_1-k_2)} &= & \left(\frac{1}{k_2}\right)_{vv} -  \left(\frac{1}{k_2}\right)_{v} (\log \Lambda)_v -   \nonumber\\
 &-& \left(\frac{1}{k_1}\right)_{uu} + \left(\frac{1}{k_1}\right)_{u} (\log \Lambda)_u. \label{g2}\end{eqnarray}
Subtracting (\ref{g1}) and (\ref{g2}), we have the following equation
\begin{equation}
\label{g3}\Lambda \ \Delta_0\log \Lambda = 2 (\cosh(\log( \sqrt{ g_{11} g_{22}} (k_2-k_1) ))).
\end{equation}

The above equations become much simpler if we impose that the third fundamental forms of $M$ and $M^{*}$ are conformal. We introduce the function $\rho$ and from now on we will consider that $III^{*} = (1/\rho^2) III$. We then have

\begin{equation}
\label{nice}
\rho = g_{11} k_1^2 \Lambda  = g_{22} k_2^2 \Lambda = \sqrt{ g_{11} g_{22}} (k_2-k_1),
\end{equation}
and from (\ref{form1})-(\ref{form6}) it is not difficult to verify that the fundamental forms of $M$ and $M^{*}$ satisfy relations (\ref{f1})-(\ref{f3}). This means that if we can prove that $M$ is a Ribaucour surface then $M^{*}$ has to be its dual Ribaucour surface. We will now continue our discussion to show that this is indeed the case.

Equations (\ref{c1}), (\ref{c2}) and (\ref{g3}) can now be rewritten as
\begin{eqnarray}
\left(\log \rho\right)_u &= & - 2 \Lambda^{-1} \left(\frac{H}{K}\right)_u\label{c3}\\
\left(\log \rho\right)_v &= & 2 \Lambda^{-1} \left(\frac{H}{K}\right)_v\label{c4}\\
\Lambda  \ \Delta_0\log \Lambda &=& \rho + \frac{1}{\rho}. \label{g4}
\end{eqnarray}

Using the preceding relations it is clear that the third fundamental form of $M$ is given by 
$$ III = \frac{\rho}{\Lambda} (du^2 + dv^2), $$
and, since its intrinsic curvature is $1$, we obtain
 
\begin{equation}
\label{gausseq}
 2 = -\frac{\Lambda}{\rho} (\Delta_0(\log \rho - \Delta_0 \log \Lambda) .
\end{equation} 

From above expression we may deduce that (\ref{g4}) is equivalent to
\begin{equation*}
\rho^2 + \rho \Delta \rho = 1 + |\nabla \rho |^2.
\end{equation*}

In other words, $\rho$ is a solution of (\ref{pde}), which is the equation for the support function  that characterizes Ribaucour surfaces with respect to the unit sphere centered at the origin.

So the final step to prove that $M$ is indeed a Ribaucour surface consists in showing that $\rho$ is related to the support function of $M$. In fact, we will show that, up to homothety and translation of $M$, $\rho$ is actually the support function of $M$. To do this we will make use of the relation between Ribaucour and minimal surfaces exposed in section \ref{ribmin}. Before going into the details, it might be worth to say a few words about the way we will proceed.

Our plan goes as follows: we will first define an auxiliary minimal surface, say $\Sigma$, using the fundamental forms of $M$, its gaussian and mean curvatures $K$ and $H$ and also the function $\rho$. Then, we will consider an appropriate solution of the system (\ref{rib1})-(\ref{rib6}) for the minimal surface $\Sigma$ in such a way that the surface considered in proposition \ref{fundformsrelation} is congruent to $M$ and its support function is a constant multiple of $\rho$. From this, we may conclude that $M$ is indeed a Ribaucour surface. With this said, we will now carry out the plan.

Motivated by the inversion of the linear relations between the quadratic forms given in proposition \ref{fundformsrelation}, we define

\begin{eqnarray}
I_{m} &=& \frac{1}{\rho^2}\left(I-\frac{2H}{K}II+\frac{H^2}{K^2}III\right), \\
II_{m} &=& -\frac{1}{\rho}\left(-II+\frac{H}{K} III\right), \\
III_{m}&=&III. \\
\end{eqnarray}

One may check, using (\ref{nice}), that the pair $(I_{m},II_{m})$ can be written in the $(u,v)$ coordinates as
\begin{eqnarray}
I_{m} &=& \frac{\Lambda}{4\rho}(du^2+dv^2), \label{mform1}\\
II_{m} &=& \frac{1}{2}(du^2-dv^2) \label{mform2},
\end{eqnarray}
it is clear that the Codazzi equations for the pair $(I_{m},II_{m})$ are satisfied and also that $I_{m}$ is positive definite. Using (\ref{gausseq}) one can check that the Gauss equation for $(I_{m},II_{m})$ is also satisfied. Thus, by Bonnet's theorem, there exists a unique minimal surface $\Sigma$ up to rigid motion having $I_{m}$ and $II_{m}$ respectively as its first and second fundamental forms.

We now use $\Sigma$ and proposition \ref{fundformsrelation} to construct a surface that is congruent to $M$ and has a multiple of $\rho$ as its support function. This implies immediately that $M$ is a Ribaucour surface since $\rho$ is a solution of (\ref{pde}).

To construct this surface we first define $\Omega=-\frac{H}{K}$, $W=\rho$, $\Omega_{1}=-\sqrt{\frac{\rho}{\Lambda}}(\frac{2H}{K})_{u}$ and $\Omega_{2}=-\sqrt{\frac{\rho}{\Lambda}}(\frac{2H}{K})_{v}$ and verify that they provide solution of the system (\ref{rib1})-(\ref{rib6}). From the above definitions, it is immediate to check that (\ref{rib3}) and (\ref{rib4}) are satisfied. Using (\ref{mform1}), (\ref{mform2}), (\ref{c3}) and (\ref{c4}) we see that (\ref{rib5}) and (\ref{rib6}) are satisfied. Finally, using the expressions obtained by differentiation of (\ref{c3}) with respect to $v$ and (\ref{c4}) with respect to $u$ we can check that (\ref{rib1}) and (\ref{rib2}) are also satisfied.

Recall that a solution of (\ref{rib1})-(\ref{rib6}) admits the first integral (\ref{firstintegral}). But for our particular solution we can determine some constants that appear in (\ref{firstintegral}). From a direct computation as in the proof of proposition \ref{ribaucourminimal} we obtain

\[
\Delta W +2W=2c\Omega-c_{2}.
\]

The expression above combined with the fact that $W$ is a solution of (\ref{pde}) implies that

\[
|\nabla W|^2+W^2-2c\Omega W+c_{2}W+1=0.
\]

We observe that if we change the pair $(\Omega,W)$ into $(\alpha_1 \Omega+ \alpha_{2},\alpha_{1}W)$, where $\alpha_{1}$ and $\alpha_{2}$ are constants, we have a new solution of (\ref{rib1})-(\ref{rib6}). We will choose $\alpha_{1}$ and $\alpha_{2}$ to generate a surface, using proposition \ref{ribaucourminimal}, such that  $I_{r}=I$ and $II_{r}=II$. It is a simple matter to check that it suffices to choose the new $W$ and $\Omega$ as 
\[
W=-\frac{1}{c}\rho , \,\,\, \Omega=-\frac{1}{c}(\frac{-H}{K})+\frac{c_{2}}{2c}.
\]

By construction this new $W$ is the support function of a Ribaucour surface that is congruent to $M$ and so $M$ itself is a Ribaucour surface.
  
Thus, as a consequence of the discussion above we have the following classification result.

\begin{theorem}
\label{classification}
 Let $M$ and $M^{*}$ be non-congruent surfaces with no umbilic points. If there is a diffeomorphism  $\Phi:M\longrightarrow M^{*}$ that preserves lines of curvature and switches the principal curvatures such that $\Phi$ is a conformal map with respect to the conformal structures on $M$ and $M^{*}$ induced by their third fundamental forms, then $M$ and $M^{*}$ are a pair of dual Ribaucour surfaces.  
\end{theorem}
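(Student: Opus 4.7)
\medskip

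\noindent\textbf{Proof plan.} The plan is to work in principal coordinates on both surfaces, extract from the conformality hypothesis a natural candidate $\rho$ for the support function of $M$, show that $\rho$ satisfies the Ribaucour PDE (\ref{pde}), and finally confirm that $\rho$ is genuinely (up to a rigid motion and a homothety) the support function of $M$ by using an auxiliary minimal surface together with Proposition \ref{fundformsrelation}. Once this is done, Proposition \ref{laguerreisothermic} and the linear relations (\ref{f1})--(\ref{f3}) will force $M^{*}$ to be the dual Ribaucour surface of $M$.

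First I would pick common principal coordinates $(u,v)$ on $M$ and $M^{*}$ so that all six fundamental forms look like (\ref{form1})--(\ref{form6}), with $k_1=k_2^{*}$ and $k_2=k_1^{*}$. Writing the Codazzi equations for both surfaces and subtracting them, the functions $g_{11}g_{11}^{*}(k_1-k_2)^2$ and $g_{22}g_{22}^{*}(k_1-k_2)^2$ depend only on $u$ and only on $v$ respectively, so a reparametrization normalizes them to $1$. Invoking the conformal hypothesis $III^{*}=\rho^{-2}III$ introduces the function $\rho$; a short computation then shows that the relations (\ref{f1})--(\ref{f3}) among the fundamental forms hold, which is exactly the recipe for dual Ribaucour surfaces. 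Hence it suffices to prove that $M$ itself is a Ribaucour surface.

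Next, I would rewrite the Codazzi equations using $\Lambda=(k_2-k_1)/(k_1k_2)$ to obtain the clean identities (\ref{c3}), (\ref{c4}), and I would rewrite the (subtracted) Gauss equations as (\ref{g4}). The conformal normalization makes $III=(\rho/\Lambda)(du^2+dv^2)$; imposing that this metric, divided by $\rho^2$, has intrinsic curvature $1$ (equivalently using Remark \ref{constcurvature}) yields exactly the PDE $\rho^2+\rho\Delta\rho=1+|\nabla\rho|^2$ from Lemma \ref{difeqrho}. So $\rho$ is a solution of the Ribaucour equation; what is not yet clear is whether $\rho$ is the support function of the immersion $M$.

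The hard step is to identify $\rho$ with the support function of $M$ up to a rigid motion and scaling. Here I would exploit the machinery of Section \ref{ribmin} in reverse: inverting the linear relations of Proposition \ref{fundformsrelation} suggests defining
\begin{align*}
I_{m}&=\tfrac{1}{\rho^{2}}\bigl(I-\tfrac{2H}{K}II+\tfrac{H^{2}}{K^{2}}III\bigr),\\
II_{m}&=\tfrac{1}{\rho}\bigl(II-\tfrac{H}{K}III\bigr),\\
III_{m}&=III,
\end{align*}
and checking that, in the chosen coordinates, $I_{m}$ is positive definite, trace-free with respect to $II_{m}$ (minimality), and that the Codazzi--Mainardi and Gauss equations for $(I_{m},II_{m})$ follow from (\ref{c3}), (\ref{c4}) and (\ref{g4}). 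Bonnet's theorem then produces a minimal surface $\Sigma$. On $\Sigma$ I would propose the explicit quadruple $\Omega=-H/K$, $W=\rho$, $\Omega_{1}=-\sqrt{\rho/\Lambda}\,(2H/K)_{u}$, $\Omega_{2}=-\sqrt{\rho/\Lambda}\,(2H/K)_{v}$, verify that it solves the system (\ref{rib1})--(\ref{rib6}) and computes to a first integral of the form $|\nabla W|^{2}+W^{2}-2c\Omega W+c_{2}W+1=0$. Finally, applying the affine symmetry $(\Omega,W)\mapsto(\alpha_{1}\Omega+\alpha_{2},\alpha_{1}W)$ of the system and choosing $\alpha_{1},\alpha_{2}$ so that the associated surface produced by Proposition \ref{ribaucourminimal} has $I_{r}=I$ and $II_{r}=II$, Bonnet uniqueness identifies this surface with $M$ up to rigid motion, and its support function is a constant multiple of $\rho$. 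Thus $M$ is a Ribaucour surface, and by the opening paragraph $M^{*}$ is its dual.

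The main obstacle I expect is precisely this last step: the PDE (\ref{pde}) alone does not pin down an immersion, and bridging the gap from ``$\rho$ solves the Ribaucour PDE'' to ``$\rho$ is the support function of $M$'' requires the auxiliary minimal surface and a careful bookkeeping of the constants in the first integral (\ref{firstintegral}). The remaining computations (Codazzi manipulations, verifying the Ribaucour system, checking the PDE is equivalent to (\ref{g4})) are routine once the coordinates and $\rho$ are in place.
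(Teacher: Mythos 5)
Your proposal is correct and follows essentially the same route as the paper's own proof: the same normalization of principal coordinates via the Codazzi equations, the same identification of $\rho$ from the conformality of the third fundamental forms, the same reduction to the PDE (\ref{pde}), and the same auxiliary minimal surface with the quadruple $\Omega=-H/K$, $W=\rho$, $\Omega_{1}$, $\Omega_{2}$ followed by the affine rescaling of $(\Omega,W)$ to match $I$ and $II$ via Bonnet's theorem. No substantive differences.
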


The search for examples to C2 that are not pairs of Ribaucour surface remains an open question.

\end{document}